\newcommand{\beq}{\begin{equation}}
\newcommand{\eeq}{\end{equation}}
\newcommand{\bea}{\begin{eqnarray}}
\newcommand{\eea}{\end{eqnarray}}
\newcommand{\beas}{\begin{eqnarray*}}
\newcommand{\eeas}{\end{eqnarray*}}
\newtheorem{theorem}{Theorem}[section]
\newtheorem{proposition}[theorem]{Proposition}
\newtheorem{lemma}[theorem]{Lemma}
\newtheorem{remark}[theorem]{Remark}
\newtheorem{example}[theorem]{Example}
\newtheorem{examples}[theorem]{Examples}
\newtheorem{foo}[theorem]{Remarks}
\newenvironment{proof}{\addvspace{\medskipamount}\par\noindent{\it
Proof}.}
{\unskip\nobreak\hfill$\Box$\par\addvspace{\medskipamount}}
\title{Bakry-\'Emery meet Villani}
\author{Fabrice Baudoin\footnote{fabrice.baudoin@uconn.edu, Research partially funded by NSF-DMS 1511328 and NSF-DMS 1660031}}
\date{Department of Mathematics \\
 University of Connecticut
}
\begin{document}
\maketitle

\begin{abstract}
We revisit Villani's approach to the study of hypocoercive diffusion operators by applying a variant of the Bakry-\'Emery machinery. The method relies on a generalized Bakry-Emery type criterion that applies to Kolmogorov type operators. Our approach includes as a special case the kinetic Fokker-Planck equation  and allows, in that case,  to recover and somehow improve  hypocoercive estimates first obtained by Villani. 
 \end{abstract}

\tableofcontents

\section{Introduction}

We study gradient bounds and convergence to equilibrium for the semigroup generated by a diffusion operator of the form $L=\sum_{i=1}^n X_i^2 +Y$, where $X_1,\cdots,X_n,Y$ are vector fields. Though our methods are more general, we are particularly interested in the case where $L$ is hypoelliptic and $\sum_{i=1}^n X_i^2$ is not, that is the hypoellipticity comes from the first order operator $Y$. The problem of convergence to an equilibrium in this type of situation has attracted a lot of interest in the literature because evolution equations involving a degenerate dissipative operator and a conservative operator naturally arise in many fields of applied mathematics: We refer to Villani's memoir \cite{Villani1} and to the references therein for a discussion about this. 

\

There have been different approaches to tackle this problem. A functional analytic approach, based on previous ideas by Kohn and H\"ormander, uses pseudo-differential calculus and delicate spectral localization tools to prove exponential convergence to equilibrium with explicit bounds on the rate. For this approach, we refer to Eckmann and Hairer \cite{EH}, H\'erau and Nier \cite{HN1}, and  Heffer and Nier \cite{HN2}. 

\

Villani in his memoir \cite{Villani1} introduces the concept of hypocoercivity and derives very general sufficient conditions ensuring the convergence to an equilibrium. The main strategy, already implicit in the work by Talay \cite{talay} , is to work in a suitable Hilbert space associated to the equation and to find in this Hilbert space a nice norm which is equivalent to the original one, but with respect to which convergence to equilibrium is easy to obtain; We refer to Section 4.1 in \cite{Villani1} for a more precise description.  

\

L. Wu in \cite{Wu}, Mattingly, Stuart and Higham in \cite{Matt},  and Bakry, Cattiaux and Guillin in \cite{BCG} use the powerful method of Lyapunov functions to prove the exponential convergence to equilibrium.

\

All these approaches have in common to use global methods to prove the convergence to equilibrium in the sense that the functional inequalities that are used are written in an integrated form with respect to the invariant measure. In contrast,  our approach parallels the Bakry-Emery approach to hypercontractivity \cite{BE} and  is only  based on local computations: We just compute second order derivatives and use Cauchy-Schwarz inequality. To perform these computations the existence of an invariant measure is not even required. 

\

Let us now describe this local approach and what is the main idea of the present work. We can associate to $L=\sum_{i=1}^n X_i^2 +Y$ its \textit{carr\'e du champ} operator
 \[
\Gamma(f,g)=\frac{1}{2}\left(L(fg)- fLg-gLf\right)
\]
 and its iteration
 \[
 \Gamma_2(f,g)=\frac{1}{2}\left(L\Gamma(f,g)-\Gamma(f,Lg)-\Gamma(g,Lf)\right).
 \]

If the operator $L$ admits a symmetric measure $\mu$ and if for every $f$, $\Gamma_2(f,f)\ge \rho \Gamma(f,f)$ for some positive constant $\rho$, then it is known  the semigroup $P_t$ generated by $L$ will converge exponentially fast to an equilibrium (see \cite{BE}). However in a number of interesting situations including the ones described before, it is impossible to bound from below $\Gamma_2$ by $\Gamma$ alone.  Actually, the Bakry-Emery criterion $\Gamma_2 \ge \rho \Gamma$ requires some form of ellipticity of the operator $L$ and fails to hold for strictly subelliptic operators. In the recent few years, there have been several works, extending the Bakry-Emery approach to subelliptic diffusion operators. We mention in particular \cite{BB, BG, BW} where a generalized curvature dimension inequality is shown to be satisfied for a large class of geometrically relevant subelliptic diffusion operators. In particular, under suitable conditions, explicit rates of convergence to equilibrium are obtained for the semigroup. The hypoelliptic situations treated in these works are quite different from the ones we have in mind here, because in \cite{BB}, \cite{BG} or \cite{BW} the operator $\sum_{i=1}^n X_i^2$ is hypoelliptic and $Y$ is in the linear span of $X_1,\cdots,X_n$. Here, we  are interested in situations where $\sum_{i=1}^n X_i^2$ is fully degenerate. Since it is impossible to bound from below $\Gamma_2$ by $\Gamma$ alone, our main idea  will be to introduce a \textit{vertical} first order bilinear form $\Gamma^Z$ and to compute the curvature of $L$ in this new vertical direction:
 \[
 \Gamma^Z_2(f,g)=\frac{1}{2}\left(L\Gamma^Z(f,g)-\Gamma^Z(f,Lg)-\Gamma^Z(g,Lf)\right).
 \]
As it turns out, in the degenerate situations mentioned above, it is sometimes possible to find, under some conditions, such a form  $\Gamma^Z$ with the property that
\begin{align}\label{CD2}
\Gamma_2(f,f)+\Gamma^Z_2(f,f) \ge -K \Gamma(f,f)+ \rho \Gamma^Z(f,f),
\end{align}
where $K \in \mathbb{R}$ and where $\rho >0$. The important point here is that $\rho$ is positive and will hence induce a convergence to equilibrium in the missing vertical direction. The bound \eqref{CD2} is local and implies several interesting pointwise bounds for the semigroup $P_t$. In the case when there is an invariant probability measure, that satisfies the Poincar\'e inequality with respect to the new gradient $\Gamma+\Gamma^Z$, then it is proved that the inequality $\eqref{CD2}$ implies exponential convergence to equilibrium for the semigroup  in a Sobolev norm with an explicit rate. If the invariant measure satisfies the log-Sobolev inequality, we obtain then an entropic convergence to equilibrium.

\

Though the methods presented in the paper are general, for the sake of presentation, we focus on the case of the kinetic Fokker-Planck equation that was originally studied by Villani and point to the reference \cite{BauSur} for the presentation of a general framework. Another example of kinetic Fokker-Planck type equation where those methods apply is given in the recent work \cite{BT}.

\section{Convergence to equilibrium for the kinetic Fokker-Planck equation}

Let $V:\mathbb{R}^n \to \mathbb{R}$ be a smooth function. The kinetic Fokker-Planck equation with confinement potential $V$ is the parabolic  partial differential equation:
\begin{equation}\label{FP1}
\frac{\partial h}{\partial t}=\Delta_v h - v \cdot \nabla_v h+\nabla V \cdot \nabla_v h -v\cdot \nabla_x h , \quad (x,v) \in \mathbb{R}^{2n}.
\end{equation}
It is the Kolmogorov-Fokker-Planck equation associated to the stochastic differential system
\[
\begin{cases}
dx_t =v_t dt \\
dv_t=-v_t dt -\nabla V (x_t) dt +dB_t,
\end{cases}
\]
where $(B_t)_{ t\ge 0}$ is a Brownian motion in $\mathbb{R}^n$. The operator
\[
L=\Delta_v  - v \cdot \nabla_v +\nabla V \cdot \nabla_v -v\cdot \nabla_x 
\]
is not elliptic but it can be written in H\"ormander's form
\[
L=\sum_{i=1}^n X_i^2 +X_0+Y,
\]
where $X_i=\frac{\partial}{\partial v_i}$, $X_0=- v \cdot \nabla_v$ and $Y=\nabla V \cdot \nabla_v -v\cdot \nabla_x $. The vectors $$(X_1,\cdots,X_n, [Y,X_1],\cdots,[Y,X_n])$$  form a basis of $\mathbb{R}^{2n}$ at each point. This implies from H\"ormander's theorem that $L$ is hypoelliptic. The operator $L$ admits for invariant measure the measure
\[
d\mu=e^{-V(x)-\frac{\| v \|^2}{2}} dxdv.
\]
It is readily checked that $L$ is not symmetric with respect to $\mu$ but that the adjoint $L^*$ in  $L^2(\mu)$ is given by
\[
L^*=\sum_{i=1}^n X_i^2 +X_0-Y.
\]
The operator $L$ is the generator of a strongly continuous sub-Markov semigroup $(P_t)_{t \ge 0}$. If we assume that the Hessian $\nabla^2 V$ is bounded, then $P_t$ is Markovian (that is $P_t 1=1$) and for any bounded Borel function $f:\mathbb{R}^{2n} \to \mathbb{R}$, $(t,x,v) \to P_t f(x,v)$ is the unique solution of the Cauchy problem
\[
\begin{cases}
\frac{\partial h}{\partial t}=Lh \\
h(0,x,v)=f(x,v).
\end{cases}
\]

One of the main results of Villani (see also Helffer and Nier \cite{HN1} for related results) concerning the convergence to equilibrium of $P_t$ is the following theorem:

\begin{theorem}[Villani \cite{Villani1}, Theorem 35]\label{Vill}
Define $H^1(\mu)=\{ f \in L^2(\mu), \| \nabla f \| \in L^2(\mu)\}$.
Assume that there is a constant $c>0$ such that $\| \nabla^2 V \| \le c( 1+ \| \nabla V \|)$ and that the normalized invariant measure $d\mu=\frac{1}{Z}e^{-V(x)-\frac{\| v \|^2}{2}} dxdv$  is a probability measure that satisfies the classical Poincar\'e inequality
\[
\int_{\mathbb{R}^{2n}} \| \nabla f \|^2 d\mu \ge \kappa \left[ \int_{\mathbb{R}^{2n}} f^2 d\mu -\left( \int_{\mathbb{R}^{2n}} f d\mu\right)^2 \right].
\]
Then, there exist constants $C>0$ and $\lambda >0$ such that for every $f \in H^1(\mu)$, with $\int_{\mathbb{R}^{2n}} f d\mu=0$,
\[
\int_{\mathbb{R}^{2n}} (P_t f)^2 d\mu + \int_{\mathbb{R}^{2n}} \| \nabla P_t f \|^2 d\mu \le C e^{-\lambda t} \left( \int_{\mathbb{R}^{2n}} f^2 d\mu + \int_{\mathbb{R}^{2n}} \| \nabla  f \|^2 d\mu\right)
\]
\end{theorem}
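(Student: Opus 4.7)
The plan is to implement the Bakry--\'Emery-type program sketched in the introduction: produce a pointwise curvature inequality of the form $\Gamma_2+\Gamma^Z_2\ge -K\Gamma+\rho\Gamma^Z$, integrate it against $\mu$, and close the argument using the classical Poincar\'e inequality. For this operator one has $\Gamma(f,f)=\|\nabla_v f\|^2$, which only controls the $v$-gradient; I would introduce the vertical gradient $\Gamma^Z(f,f)=\|\nabla_x f\|^2$ together with the mixed bilinear form $\Gamma^M(f,f)=\nabla_v f\cdot\nabla_x f$, since the commutator identities
\[
[L,\partial_{v_i}]=\partial_{v_i}+\partial_{x_i},\qquad [L,\partial_{x_i}]=-\sum_{j}(\partial^2_{ij}V)\,\partial_{v_j}
\]
are precisely what couple these three objects.

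\textbf{Local computation.} Direct expansion using the commutators yields
\[
\Gamma_2(f,f)=\|\nabla^2_v f\|_{HS}^2+\|\nabla_v f\|^2+\nabla_v f\cdot\nabla_x f,
\]
\[
\Gamma^M_2(f,f)=\mathrm{tr}(\nabla^2_v f\,\nabla_v\nabla_x f)+\tfrac12\|\nabla_x f\|^2+\tfrac12\nabla_v f\cdot\nabla_x f-\tfrac12(\nabla_v f)^T\nabla^2 V\,\nabla_v f,
\]
\[
\Gamma^Z_2(f,f)=\|\nabla_v\nabla_x f\|_{HS}^2-(\nabla_x f)^T\nabla^2 V\,\nabla_v f.
\]
The key observation is that $\Gamma^M_2$ supplies a positive multiple of $\|\nabla_x f\|^2$ for free. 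For small positive constants $a,b,c$ with $b^2<ac$ I would show that $a\Gamma_2+2b\Gamma^M_2+c\Gamma^Z_2$ is bounded below by $-K\Gamma+\rho(\Gamma+\Gamma^Z)$: the positivity of the quadratic form in $(\nabla^2_v f,\nabla_v\nabla_x f)$ follows from $b^2<ac$, and the indefinite Hessian cross terms are absorbed by Young's inequality, the structural bound $\|\nabla^2 V\|\le c(1+\|\nabla V\|)$ being used to split off the $\|\nabla V\|$-factor and pass it to the invariant-measure estimates in the global step.

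\textbf{Global step and main obstacle.} Consider the modified functional
\[
\Phi(f)=\int f^2\,d\mu+a\int\|\nabla_v f\|^2\,d\mu+2b\int\nabla_v f\cdot\nabla_x f\,d\mu+c\int\|\nabla_x f\|^2\,d\mu,
\]
which is equivalent to $\|f\|_{H^1(\mu)}^2$ as soon as $b^2<ac$. Integrating the pointwise curvature inequality against $\mu$ and using the invariance $\int L\phi\,d\mu=0$, each $\Gamma_2$-type term realises itself as minus the time derivative of the corresponding quadratic form along $P_t f$; the assumption $\int f\,d\mu=0$ lets the Poincar\'e inequality turn the $-K\Gamma$ contribution into a strict negative multiple of $\int(P_t f)^2\,d\mu$, giving $\frac{d}{dt}\Phi(P_t f)\le -\eta\,\Phi(P_t f)$ for some $\eta>0$ and then the theorem by Gr\"onwall. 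The main obstacle is the balancing in the local step: the Hessian cross term $(\nabla_x f)^T\nabla^2 V\,\nabla_v f$ has no sign, so the constants $(a,b,c)$ have to be tuned carefully to keep the quadratic form in $(\nabla^2_v f,\nabla_v\nabla_x f)$ positive after Young's inequality while still leaving a strictly positive multiple of $\|\nabla_x f\|^2$ on the right, and it is precisely at this juncture that the growth hypothesis on $\nabla^2 V$ is indispensable.
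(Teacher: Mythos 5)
Your local computations are correct: the commutator identities, the formulas for $\Gamma_2$, $\Gamma_2^M$, $\Gamma_2^Z$, and the discriminant condition $b^2<ac$ for positivity of the Hessian quadratic form all check out. However, two points deserve comment.

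First, on strategy: the paper does not keep $\|\nabla_v f\|^2$, $\nabla_v f\cdot\nabla_x f$, $\|\nabla_x f\|^2$ as three separate objects with free weights $(a,2b,c)$. It instead chooses a single auxiliary vector field $Z_i=2\partial_{x_i}+\partial_{v_i}$ and works with the genuine carr\'e du champ $\Gamma^Z(f)=\sum_i(Z_if)^2=4\|\nabla_x f\|^2+4\nabla_x f\cdot\nabla_v f+\|\nabla_v f\|^2$, so that $\Gamma+\Gamma^Z$ corresponds to the specific choice $(a,2b,c)=(2,4,4)$. This is not just cosmetic: because the modified form is literally $\sum_i(Z_if)^2$, the paper can run the interpolation argument $\Psi(s)=P_s\bigl(\Gamma(P_{t-s}f)+\Gamma^Z(P_{t-s}f)\bigr)$ pointwise, which yields Lemmas 2.4 and 2.5 (pointwise gradient and log-gradient bounds) as byproducts before any integration against $\mu$. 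Your weighted-functional approach is essentially Villani's own, where a symmetric but indefinite combination with a cross term is propagated only in integrated form; it recovers the $H^1$ convergence but not the pointwise estimates that are the paper's contribution.

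Second, and more importantly, there is a genuine gap precisely at the step you flag as the ``main obstacle.'' The paper explicitly restricts to the case where $\nabla^2 V$ is bounded, acknowledging this is a stronger assumption than Villani's growth condition $\|\nabla^2 V\|\le c(1+\|\nabla V\|)$, and it is only under the bounded-Hessian hypothesis that the pointwise inequality $\Gamma_2+\Gamma_2^Z\ge -K(\eta)\Gamma+\eta\Gamma^Z$ holds with constants (Proposition 2.3). Under the growth condition alone, no such pointwise bound exists: the cross terms $(\nabla_v f)^T\nabla^2V\,\nabla_v f$ and $(\nabla_x f)^T\nabla^2V\,\nabla_v f$ carry an unbounded factor $\|\nabla V\|$ that cannot be absorbed by Young's inequality into $\Gamma$ and $\Gamma^Z$. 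Even in your integrated formulation, $\int(\nabla_x g)^T\nabla^2 V\,\nabla_v g\,d\mu$ is not controlled by $\Phi(g)$ via Cauchy--Schwarz alone; one needs the auxiliary moment estimate $\int\|\nabla V\|^2 h^2\,d\mu\lesssim \int\|\nabla h\|^2 d\mu+\int h^2 d\mu$, which in Villani's memoir is a separate lemma derived from the Poincar\'e inequality together with the growth hypothesis, and which must then be applied to components of $\nabla g$ with some care. Your phrase ``pass it to the invariant-measure estimates in the global step'' names the right idea but supplies no argument; as written the proposal proves only the bounded-Hessian version that the paper proves, not the full statement.
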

It is worth observing that since $\mu$ is a product, it satisfies the Poincar\'e inequality as soon as the marginal measure $d\mu_x= e^{-V(x)} dx$ satisfies the Poincar\'e inequality on $\mathbb{R}^n$.

\

In this section, after some preliminaries, we first give in Section 2.3 a new proof of this result under the  assumption that the Hessian $\nabla^2 V$ be bounded. This is a stronger assumption than in Villani's result. However our method gives pointwise gradient estimates that can not be obtained by Villani's method.  It also provides a better control of the constants $C$ and $\lambda$.  As we shall see in Section 2.4, Villani's result can be recovered with its optimal assumption on $\nabla^2 V$  by integrating the local inequalities obtained in Section \ref{JKL}. In Section 2.5 we show that a very small variation of our method will almost immediately give an entropic convergence of $P_t$, under the assumption that $\mu$ satisfies the log-Sobolev inequality. This entropic convergence is also obtained by Villani under the assumption that $\nabla^2 V$ is bounded. In the final Section 2.6, we study the convergence in the Kantorovich-Wasserstein distance $W_2$. It is obtained under strong assumptions on the potential $V$ but has the advantage not to explicitly use the invariant measure.

\subsection{$\Gamma_2$ calculus for the kinetic Fokker-Planck equation}\label{JKL}

Throughout the section we assume that $\nabla^2 V$ is bounded.

 Following Bakry and \'Emery \cite{BE}  we associate to  $L=\Delta_v  - v \cdot \nabla_v +\nabla V \cdot \nabla_v -v\cdot \nabla_x$ the \text{carr\'e du champ} operator,
 \[
\Gamma(f,g)=\frac{1}{2}\left(L(fg)- fLg-gLf\right)=\nabla_v f \cdot \nabla_v g=\sum_{i=1}^n \frac{\partial f}{\partial v_i}\frac{\partial g}{\partial v_i}
\]
 and its iteration
 \[
 \Gamma_2(f,g)=\frac{1}{2}\left(L\Gamma(f,g)-\Gamma(f,Lg)-\Gamma(g,Lf)\right).
 \]
 For simplicity of notations, we will denote $\Gamma(f):=\Gamma(f,f)$ and $\Gamma_2(f):=\Gamma_2(f,f)$. A straightforward computation shows that:
 \begin{lemma}
 For $f \in C^\infty(\mathbb{R}^{2n})$,
 \begin{align*}
 \Gamma_2(f)& =\| \nabla_v^2 f \|^2 +\Gamma(f)+ \nabla_x f\cdot \nabla_v f, \\
  &=\sum_{i,j=1}^n \left( \frac{\partial^2 f}{\partial v_i \partial v_j}\right)^2 +\sum_{i=1}^n \left( \frac{\partial f}{\partial v_i} \right)^2+\sum_{i=1}^n  \frac{\partial f}{\partial x_i}  \frac{\partial f}{\partial v_i} 
  \end{align*}
 \end{lemma}
 
 The term $\nabla_x f\cdot \nabla_v f$ makes impossible to bound from below $\Gamma_2$ by $\Gamma$ alone. As a consequence the Bakry-\'Emery curvature of $L$ is $-\infty$. The idea is now  to introduce a carefully chosen vertical gradient and to compute the corresponding curvature of $L$ in this vertical direction. For $i=1,\cdots,n$, we denote $$Z_i= 2\frac{\partial }{\partial x_i }+\frac{\partial }{\partial v_i },$$
 and 
 \[
 Zf=2\nabla_x f +\nabla_v f.
 \]
 We define then 
 \[
 \Gamma^Z(f,g)= Zf \cdot Zg =\sum_{i=1}^n Z_i fZ_ig
 \]
 and
 \[
 \Gamma^Z_2(f,g)=\frac{1}{2}\left(L\Gamma^Z(f,g)-\Gamma^Z(f,Lg)-\Gamma^Z(g,Lf)\right).
 \]
 
 \begin{lemma}
 For $f \in C^\infty(\mathbb{R}^{2n})$,
 \begin{align*}
 \Gamma_2^Z(f)& =\| \nabla_v Z f\|^2 +\frac{1}{2}\Gamma^Z(f) +\frac{1}{2}\nabla_v f\cdot Z f-2\nabla^2V(\nabla_v f,Zf) \\
  &=\sum_{i,j=1}^n \left( \frac{\partial}{\partial v_i} Z_j f \right)^2+\frac{1}{2}\Gamma^Z(f)+\frac{1}{2}\sum_{i=1}^n\frac{\partial f}{\partial v_i}Z_if-2\sum_{i,j=1}^n \frac{\partial^2V}{\partial x_i \partial x_j}\frac{\partial f}{\partial v_i} Z_jf.
 \end{align*}
 \end{lemma}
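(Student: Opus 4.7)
The plan is to reduce the computation of $\Gamma_2^Z(f)$ to an explicit commutator calculation, and then to evaluate $[L,Z_i]$ by expanding $L$ piece by piece. Since $L$ is a diffusion operator whose carr\'e du champ is $\Gamma(\phi)=\|\nabla_v\phi\|^2$, the standard chain rule $L(\phi^2) = 2\phi L\phi + 2\Gamma(\phi)$ applied to $\phi = Z_i f$ and summed over $i$ gives
\[
\tfrac{1}{2}L\Gamma^Z(f) \;=\; \sum_{i=1}^n Z_i f\cdot L(Z_i f) \;+\; \sum_{i=1}^n \|\nabla_v Z_i f\|^2.
\]
Subtracting $\Gamma^Z(f,Lf)=\sum_i Z_i f\cdot Z_i(Lf)$ isolates the commutators and yields the key identity
\[
\Gamma_2^Z(f) \;=\; \|\nabla_v Z f\|^2 \;+\; \sum_{i=1}^n Z_i f\cdot [L,Z_i]\,f,
\]
where $\|\nabla_v Z f\|^2 = \sum_{i,j}(\partial_{v_j}Z_i f)^2$ is already the first term of the claim. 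Everything now reduces to computing $[L,Z_i]$.

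For that, I would split $L=\Delta_v + (-v\cdot\nabla_v) + \nabla V\cdot\nabla_v + (-v\cdot\nabla_x)$ and commute each summand with $Z_i=2\partial_{x_i}+\partial_{v_i}$. The Laplacian $\Delta_v$ has constant coefficients and commutes with $Z_i$. The dilation $-v\cdot\nabla_v$ commutes with $2\partial_{x_i}$ and satisfies $[-v\cdot\nabla_v,\partial_{v_i}]=\partial_{v_i}$. The potential drift commutes with $\partial_{v_i}$ (since $V$ depends only on $x$) while $[\nabla V\cdot\nabla_v,\,2\partial_{x_i}] = -2\sum_j \partial^2_{x_ix_j}V\cdot\partial_{v_j}$. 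Finally, the transport term commutes with $2\partial_{x_i}$ and gives $[-v\cdot\nabla_x,\partial_{v_i}]=\partial_{x_i}$. Summing these contributions and using $2\partial_{x_i}=Z_i-\partial_{v_i}$ to eliminate the bare $\partial_{x_i}$ produces
\[
[L,Z_i] \;=\; \tfrac12\,Z_i \;+\; \tfrac12\,\partial_{v_i} \;-\; 2\sum_{j=1}^n \frac{\partial^2 V}{\partial x_i\partial x_j}\,\partial_{v_j}.
\]

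Substituting this into the commutator identity and summing against $Z_i f$ produces exactly the remaining three terms of the statement: $\tfrac12\sum_i (Z_i f)^2 = \tfrac12\Gamma^Z(f)$, $\tfrac12\sum_i Z_i f\cdot\partial_{v_i}f = \tfrac12\nabla_v f\cdot Zf$, and $-2\sum_{i,j} Z_i f\cdot\partial^2_{x_ix_j}V\cdot\partial_{v_j}f = -2\nabla^2V(\nabla_v f,Zf)$. There is no serious obstacle here; the entire argument is an accounting of commutators once the diffusion identity has reduced $\Gamma_2^Z$ to the bracket form. The only two spots that require attention are the sign in $[-v\cdot\nabla_v,\partial_{v_i}]=+\partial_{v_i}$ (as opposed to $-\partial_{v_i}$), and the algebraic rewriting $\partial_{x_i}=\tfrac12(Z_i-\partial_{v_i})$, which is precisely what generates the factor $\tfrac12$ in front of both $\Gamma^Z(f)$ and $\nabla_v f\cdot Zf$ in the final expression.
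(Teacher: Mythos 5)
Your proof is correct and follows essentially the same route as the paper: both reduce $\Gamma_2^Z(f)$ to the identity $\|\nabla_v Zf\|^2 + \sum_i Z_if\,[L,Z_i]f$ and then compute the commutator term by term, substituting $\partial_{x_i}=\tfrac12(Z_i-\partial_{v_i})$ at the end. The only cosmetic difference is that the paper groups the lower-order part of $L$ into two vector fields $X_0=-v\cdot\nabla_v$ and $Y=\nabla V\cdot\nabla_v-v\cdot\nabla_x$ and computes $[X_0,Z_i]$ and $[Y,Z_i]$ separately, whereas you commute $Z_i$ against each of the four summands of $L$; the intermediate bookkeeping differs but the final commutator $[L,Z_i]=\tfrac12 Z_i+\tfrac12\partial_{v_i}-2\sum_j \partial^2_{x_ix_j}V\,\partial_{v_j}$ and the resulting assembly match the paper exactly.
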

 \begin{proof}
 Let us write 
 \[
L=\sum_{i=1}^n X_i^2 +X_0+Y,
\]
where $X_i=\frac{\partial}{\partial v_i}$, $X_0=- v \cdot \nabla_v$ and $Y=\nabla V \cdot \nabla_v -v\cdot \nabla_x $. We have
\begin{align*}
\Gamma^Z_2(f)&=\frac{1}{2}\left(L\Gamma^Z(f)-2\Gamma^Z(f,Lf)\right) \\
 &=\frac{1}{2}\left(L\left(\sum_{i=1}^n (Z_if)^2\right)-2\sum_{i=1}^n Z_if Z_iLf\right)\\
 &=\sum_{i=1}^n \Gamma(Z_i f) +\sum_{i=1}^n Z_if [L,Z_i]f \\
 &=\sum_{i=1}^n \Gamma(Z_i f) +\sum_{i=1}^n Z_if [X_0,Z_i]f +\sum_{i=1}^n Z_if [Y,Z_i]f.
\end{align*}
We now compute,
\[
[X_0,Z_i]=X_i
\]
and
\[
[Y,Z_i]=\frac{1}{2}Z_i-\frac{1}{2}X_i-2 \sum_{j=1}^n \frac{\partial^2V}{\partial x_i \partial x_j}\frac{\partial }{\partial v_j} .
\]
The result follows then easily.
 \end{proof}
 
 A consequence of the previous computations is the following lower bound for $\Gamma_2+\Gamma_2^Z$.
 
 \begin{proposition}\label{lowerbound}
 For every $0 < \eta <\frac{1}{2}$, there exists $K(\eta) \ge -\frac{1}{2}$ such that for every $f \in C^\infty(\mathbb{R}^{2n})$, 
 \[
 \Gamma_2(f)+\Gamma_2^Z(f) \ge -K(\eta) \Gamma(f)+\eta \Gamma^Z(f).
 \]
 \end{proposition}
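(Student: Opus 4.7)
The plan is to sum the two formulas supplied by the preceding lemmas, use the relation $Z_i=2\partial_{x_i}+\partial_{v_i}$ to eliminate $\nabla_x f$ in favor of $Zf$ and $\nabla_v f$, and then absorb the remaining cross terms by weighted Cauchy--Schwarz. The only hypothesis used beyond the two lemmas is the boundedness of $\nabla^2 V$, and it enters only to control the Hessian cross term appearing in $\Gamma_2^Z$.

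First, from $Z_i=2\partial_{x_i}+\partial_{v_i}$ one has $\nabla_x f=\tfrac12(Zf-\nabla_v f)$, so
\[
\nabla_x f\cdot\nabla_v f \;=\; \tfrac12\,\nabla_v f\cdot Zf \;-\; \tfrac12\,\Gamma(f).
\]
Adding the expressions for $\Gamma_2(f)$ and $\Gamma_2^Z(f)$ from the two lemmas and discarding the nonnegative terms $\|\nabla_v^2 f\|^2$ and $\|\nabla_v Zf\|^2$, I obtain
\[
\Gamma_2(f)+\Gamma_2^Z(f) \;\ge\; \tfrac12\Gamma(f)+\tfrac12\Gamma^Z(f)+\nabla_v f\cdot Zf - 2\nabla^2 V(\nabla_v f,Zf).
\]

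Setting $M:=\sup\|\nabla^2 V\|<\infty$, weighted Cauchy--Schwarz gives, for any $\alpha,\beta>0$,
\[
\nabla_v f\cdot Zf\;\ge\;-\tfrac{1}{2\alpha}\Gamma(f)-\tfrac{\alpha}{2}\Gamma^Z(f), \qquad -2\nabla^2 V(\nabla_v f,Zf)\;\ge\;-\tfrac{M}{\beta}\Gamma(f)-M\beta\,\Gamma^Z(f),
\]
and combining,
\[
\Gamma_2(f)+\Gamma_2^Z(f)\;\ge\;\Big(\tfrac12-\tfrac{1}{2\alpha}-\tfrac{M}{\beta}\Big)\Gamma(f)+\Big(\tfrac12-\tfrac{\alpha}{2}-M\beta\Big)\Gamma^Z(f).
\]
Given $0<\eta<1/2$, I would take $\alpha=\tfrac12-\eta$ and (if $M>0$) $\beta=\tfrac{1}{2M}(\tfrac12-\eta)$, which makes the coefficient of $\Gamma^Z(f)$ exactly $\eta$. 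Setting $K(\eta):=\tfrac{1}{2\alpha}+\tfrac{M}{\beta}-\tfrac12$ then yields the proposition, and since both fractions on the right are strictly positive we automatically have $K(\eta)\ge-\tfrac12$.

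No step is genuinely hard. The one conceptual point is the identity $\nabla_x f=\tfrac12(Zf-\nabla_v f)$: it is precisely this substitution that converts the troublesome term $\nabla_x f\cdot\nabla_v f$ in $\Gamma_2$ into a controlled Cauchy--Schwarz pairing between $\Gamma(f)$ and $\Gamma^Z(f)$, plus a piece that is simply absorbed into $\Gamma(f)$. In other words, the factor $2$ in front of $\partial_{x_i}$ in the definition of $Z_i$ is tuned exactly so that the ``bad'' cross term responsible for the failure of the classical Bakry--\'Emery estimate becomes expressible in terms of the new vertical gradient, after which the uniform bound on $\nabla^2 V$ finishes the job.
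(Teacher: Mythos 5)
Your proof is correct and follows the paper's structure: substitute $\nabla_x f=\tfrac12(Zf-\nabla_v f)$ in $\Gamma_2$, add the two Bochner-type lemmas (discarding the nonnegative Hessian norms), arrive at
\[
\Gamma_2(f)+\Gamma_2^Z(f)\ge\tfrac12\Gamma(f)+\tfrac12\Gamma^Z(f)+\nabla_vf\cdot Zf-2\nabla^2V(\nabla_vf,Zf),
\]
and then absorb the cross terms. The one place you diverge from the paper is in the absorption step. The paper combines the last two summands into the single bilinear form $(\mathbf{Id}-2\nabla^2V)(\nabla_vf,Zf)$ and completes the square as a whole, obtaining nonnegativity under the matrix condition $4\left(\tfrac12-\eta\right)\left(\tfrac12+K\right)\ge(\mathbf{Id}-2\nabla^2V)^2$. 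You instead apply two separate weighted Cauchy--Schwarz estimates to $\nabla_vf\cdot Zf$ and to $-2\nabla^2V(\nabla_vf,Zf)$. Both are correct and both prove the proposition, but the paper's joint completion of the square is sharper: when $\nabla^2V$ is close to $\tfrac12\mathbf{Id}$ the matrix $\mathbf{Id}-2\nabla^2V$ is small and the paper's condition permits $K(\eta)$ near (or even equal to) $-\tfrac12$ --- this is exactly what drives the remark immediately after the proposition, that $K(\eta)<0$ is achievable when $a\le\nabla^2V\le b$ with $0<a<b<1$. Your split bound, by contrast, always produces $K(\eta)=\tfrac{1}{2\alpha}+\tfrac{M}{\beta}-\tfrac12>0$, so it proves Proposition 2.4 but would not recover that refinement. (Also, a minor point: when $M=0$ your stated choice $\alpha=\tfrac12-\eta$ makes the $\Gamma^Z$ coefficient equal to $\tfrac14+\tfrac{\eta}{2}>\eta$ rather than exactly $\eta$; this is harmless since a larger coefficient is only stronger.)
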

 
 \begin{proof}
 As a consequence of the previous lemmas, we have
 \[
 \Gamma_2(f) \ge \frac{1}{2} \Gamma(f)+ \frac{1}{2}\nabla_v f\cdot Z f
 \]
 and
 \[
 \Gamma_2^Z(f) \ge \frac{1}{2}\Gamma^Z(f) +\frac{1}{2}\nabla_v f\cdot Z f-2\nabla^2V(\nabla_v f,Zf).
 \]
 We deduce
 \[
 \Gamma_2(f) + \Gamma_2^Z(f) \ge  \frac{1}{2} \Gamma(f)+\frac{1}{2}\Gamma^Z(f) +\nabla_v f\cdot Z f-2\nabla^2V(\nabla_v f,Zf).
 \]
 We now pick $0<\eta <\frac{1}{2}$ and $K \in \mathbb{R}$ and write
 \begin{align*}
  &  \frac{1}{2} \Gamma(f)+\frac{1}{2}\Gamma^Z(f) +\nabla_v f\cdot Z f-2\nabla^2V(\nabla_v f,Zf) \\
  =&\eta \Gamma^Z(f) -K\Gamma(f) +\left( \frac{1}{2}-\eta\right) \Gamma^Z(f)+\left(\frac{1}{2}+K\right)\Gamma(f) +(\mathbf{Id}-2\nabla^2V)(\nabla_v f,Zf).
 \end{align*}
 The bilinear form $\left( \frac{1}{2}-\eta\right) \Gamma^Z(f)+\left(\frac{1}{2}+K\right)\Gamma(f) +(\mathbf{Id}-2\nabla^2V)(\nabla_v f,Zf)$ can now be made non negative as soon as, in the sense of symmetric matrices,
 \[
 4 \left( \frac{1}{2}-\eta\right) \left(\frac{1}{2}+K\right) \ge ( \mathbf{Id}-2\nabla^2V )^2.
 \]
 The claim follows then from the fact that we assume that $\nabla^2V$ is bounded on $\mathbb{R}^n$
 \end{proof}
 
 \begin{remark}
 
 \
 
 \begin{itemize}
 \item The keypoint of the previous proposition is the positivity of $\eta$ which will imply the coercivity of  $P_t$ in the vertical direction. The sign of $K(\eta)$ is not that relevant in the sense that the coercivity of $P_t$ in the horizontal direction can  be obtained if we assume the invariant measure to satisfy a Poincar\'e inequality.
\item If there are constants $0<a<b<1$ such that for every $x \in \mathbb{R}^n$, $a \le \nabla^2 V \le b$, then the previous proof shows that we can chose $K(\eta)$ to be negative.
 \end{itemize}
 \end{remark}
 
 \subsection{Gradient bounds}
 Throughout the section we assume that $\nabla^2 V$ is bounded.
 
 We prove now some global  bounds for the gradient of the semigroup $(P_t)_{t \ge 0}$.  Related pointwise gradient bounds in this kinetic model were also obtained by Guillin and Wang \cite{FYW}, but our bounds have the advantage to be global. Let us also observe that such bounds can not be obtained by Villani's method.
 
 \
  
 The previous computations have shown that for $f \in C^\infty(\mathbb{R}^{2n})$, 
  \[
 \Gamma_2(f)+\Gamma_2^Z(f) \ge \lambda(\eta)( \Gamma(f)+ \Gamma^Z(f)).
 \]
 where $\lambda(\eta)= \min (-K(\eta),\eta)$. With this lower bound in hands we can use the methods introduced by Baudoin-Bonnefont \cite{BB} and F.Y. Wang \cite{FYW2} to obtain the following results.

 \begin{lemma}\label{GB}
 If $f$ is a bounded Lipschitz function on $\mathbb{R}^{2n}$, then for every $t \ge 0$, $P_tf$ is a bounded and  Lipschitz function. More precisely, with the notations of Lemma \ref{lowerbound}, for every $(x,v) \in \mathbb{R}^{2n}$,
  \[
  \Gamma(P_tf) (x,v)+\Gamma^Z(P_tf)(x,v) \le e^{-2\lambda(\eta)t} P_t ( \Gamma(f) +\Gamma^Z(f) )(x,v),
  \]
  where $\lambda(\eta)= \min (-K(\eta),\eta)$.
 \end{lemma}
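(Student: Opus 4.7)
The plan is a standard Bakry--\'Emery interpolation argument, now applied to the sum $\Gamma + \Gamma^Z$. Fix $t>0$, and for sufficiently regular $f$ put $g_s := P_{t-s}f$ and
\[
\Phi(s) := P_s\bigl(\Gamma(g_s) + \Gamma^Z(g_s)\bigr), \qquad s \in [0,t].
\]
Differentiating and using $\partial_s g_s = -L g_s$ together with the identity $\partial_s \Gamma(g_s) = -2\Gamma(g_s, L g_s)$ and its $\Gamma^Z$-analogue, a direct computation gives
\[
\Phi'(s) = 2\, P_s\bigl(\Gamma_2(g_s) + \Gamma_2^Z(g_s)\bigr).
\]
Proposition \ref{lowerbound} (with $\lambda(\eta) = \min(-K(\eta),\eta)$) then yields the Gronwall-type inequality $\Phi'(s) \geq 2\lambda(\eta)\,\Phi(s)$, whence $\Phi(t) \geq e^{2\lambda(\eta)t}\Phi(0)$. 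Since $\Phi(0) = \Gamma(P_tf) + \Gamma^Z(P_tf)$ and $\Phi(t) = P_t\bigl(\Gamma(f)+\Gamma^Z(f)\bigr)$, rearranging gives the announced pointwise bound.

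To make this rigorous I would first run the argument for $f \in C_0^\infty(\mathbb{R}^{2n})$: hypoellipticity of $L$ ensures $P_s f \in C^\infty$, so all quantities entering $\Gamma_2(g_s)$ and $\Gamma_2^Z(g_s)$ are classical derivatives, and the differentiation of $\Phi$ under $P_s$ is legitimate provided one has suitable growth control. The assumption that $\nabla^2 V$ is bounded is crucial here: combined with the linear drift in $v$, it lets one prove a priori polynomial-in-$(x,v)$ bounds for $\Gamma(P_t f)$ and $\Gamma^Z(P_t f)$ (differentiate the stochastic flow associated to $L$, or use standard commutator estimates between $L$ and $\partial_{v_i}$, $Z_i$), which is exactly what is needed to interchange $\partial_s$ and $P_s$ and to ensure that $\Phi$ is $C^1$ on $[0,t]$. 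From the inequality for smooth compactly supported $f$, one extends to bounded Lipschitz $f$ by approximating $f$ by a sequence $(f_k) \subset C_0^\infty$ with uniformly bounded $\Gamma(f_k) + \Gamma^Z(f_k)$ (controlled by the Lipschitz constant of $f$) and $f_k \to f$ locally uniformly. Passing to the limit on the right via dominated convergence for $P_t$, and using the smoothness of $P_t f$ on the left together with local uniform convergence of derivatives, yields the inequality for every bounded Lipschitz $f$. The boundedness of $P_t f$ is immediate from $P_t 1 = 1$, and the Lipschitz property is an immediate consequence of the gradient estimate.

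The algebra of the proof is identical to Bakry--\'Emery; the genuine obstacle is the functional-analytic justification in the hypoelliptic setting, because $L$ has an unbounded first-order drift and the horizontal directions $\partial/\partial x_i$ are not controlled by $\Gamma$. The trick in this paper is that $\Gamma + \Gamma^Z$ does control all directions (the vector fields $\partial_{v_i}, Z_i$ span $\mathbb{R}^{2n}$), so once one knows that $P_t f$ has a priori polynomial growth of its derivatives in the $(\partial_{v_i}, Z_i)$-frame, the formal computation goes through. The bounded Hessian hypothesis on $V$ is what keeps the commutators $[X_0,Z_i]$ and $[Y,Z_i]$ appearing in the $\Gamma_2^Z$ calculation under control, and is therefore exactly the hypothesis that allows both Proposition \ref{lowerbound} and the growth estimates needed to legitimize the interpolation.
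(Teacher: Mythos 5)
Your argument is correct and is essentially the one in the paper: you set up the same interpolation functional $\Phi(s)=P_s(\Gamma(P_{t-s}f)+\Gamma^Z(P_{t-s}f))$, differentiate to expose $\Gamma_2+\Gamma_2^Z$, and apply Proposition \ref{lowerbound} plus Gronwall to get the exponential bound. The only divergence is in the rigorous justification of differentiating under $P_s$: the paper exhibits the explicit Lyapunov function $W(x,v)=1+\|x\|^2+\|v\|^2$ satisfying $LW\le CW$, $\|\nabla W\|\le CW$ (using $\nabla V$ Lipschitz), and invokes the localization machinery of Baudoin--Bonnefont (Prop.~2.2) or Wang (Lemma~2.1), whereas you sketch a priori polynomial growth bounds on the derivatives of $P_tf$ followed by approximation of $f$ by compactly supported functions; both are standard and equivalent ways to close the same functional-analytic gap, so I would not call this a genuinely different route.
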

 
 \begin{proof}
  The heuristic argument is the following:  We fix $(x,v) \in \mathbb{R}^{2n}$, $t >0$ and consider the functional
  \[
  \Psi(s)=P_s (\Gamma(P_{t-s} f) +\Gamma^Z(P_{t-s} f) )(x,v).
  \]
  Differentiating $\Psi$ , leads to
  \[
   \Psi'(s)=2P_s (\Gamma_2(P_{t-s} f) +\Gamma_2^Z(P_{t-s} f) )(x,v)\ge 2 \lambda(\eta) P_s (\Gamma(P_{t-s} f) +\Gamma^Z(P_{t-s} f) )(x,v)=2\lambda(\eta) \Psi(s).
  \]
  Therefore, we obtain $\Psi(t) \ge e^{\lambda(\eta) t} \Psi(0)$, which is the claimed inequality. In order to rigorously justify this argument, we observe that since $\nabla V$ is Lipschitz, the function $W(x,v)=1+\| x \|^2 +\| v\|^2$ is a Lyapunov function  such that, for some constant $C>0$, $LW \le C W$ and $\| \nabla W \| \le C W$. We can then use Proposition \ref{lowerbound} and argue like \cite{BB} Proposition 2.2, or \cite{FYW2}, Lemma 2.1. 
\end{proof}
 
 A direction computation shows that, since the vector fields $X_i$ and $Z_j$ commute, we do have the following intertwining of the quadratic forms $\Gamma$, $\Gamma^Z$: For every $f \in C^\infty(\mathbb{R}^{2n})$,
 \[
 \Gamma(f ,\Gamma^Z(f))=\Gamma^Z(f,\Gamma(f)).
 \]
 This intertwining leads to the following entropic type pointwise  bound:
 
\begin{lemma}\label{EB}
Let $f \in C^\infty(\mathbb{R}^{2n})$ be a positive function such that $\sqrt{f}$ is bounded and Lipschitz, then for $t \ge 0$, $\sqrt{P_t f}$ is bounded and Lipschitz. More precisely,  with the notations of Lemma \ref{lowerbound}, for every $(x,v) \in \mathbb{R}^{2n}$,
  \[
  P_t f (x,v)\Gamma(\ln P_tf) (x,v)+P_t f (x,v)\Gamma^Z(\ln  P_tf)(x,v) \le e^{-2\lambda(\eta)t} P_t (f \Gamma(\ln f) +f\Gamma^Z(\ln f) )(x,v),
  \]
  where $\lambda(\eta)= \min (-K(\eta),\eta)$.
    \end{lemma}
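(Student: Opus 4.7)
The plan is to adapt the classical Bakry--\'Emery reverse entropy argument to the enlarged form $\Gamma+\Gamma^Z$, substituting Proposition~\ref{lowerbound} for the usual curvature--dimension bound. For fixed $(x,v)\in\mathbb R^{2n}$ and $t>0$, I would set $g_s:=P_{t-s}f$, which is positive by the maximum principle, and consider the interpolation
\[
\Psi(s) := P_s\!\left( g_s \Gamma(\ln g_s) + g_s \Gamma^Z(\ln g_s) \right)(x,v) = P_s\!\left( \frac{\Gamma(g_s)+\Gamma^Z(g_s)}{g_s} \right)(x,v).
\]
The goal is to show $\Psi'(s)\ge 2\lambda(\eta)\Psi(s)$ for all $0\le s\le t$; integrating then gives $\Psi(t)\ge e^{2\lambda(\eta)t}\Psi(0)$, and since $\Psi(0)=(P_tf)\Gamma(\ln P_tf)+(P_tf)\Gamma^Z(\ln P_tf)$ and $\Psi(t)=P_t\bigl(f\Gamma(\ln f)+f\Gamma^Z(\ln f)\bigr)$ at $(x,v)$, this is exactly the claimed inequality.

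To identify $\Psi'(s)$, I would first derive, by routine diffusion chain-rule expansions together with $\partial_s g_s=-Lg_s$, the classical Bakry--\'Emery identity
\[
L\!\left(\frac{\Gamma(g)}{g}\right) - \frac{2\Gamma(g,Lg)}{g} + \frac{\Gamma(g)\,Lg}{g^2} = 2g\,\Gamma_2(\ln g),
\]
valid for any positive smooth $g$. The analogous identity with $\Gamma$ replaced by $\Gamma^Z$ is subtler: direct expansion leaves a residual term proportional to $\Gamma(g,\Gamma^Z(g))-\Gamma^Z(g,\Gamma(g))$, which vanishes precisely because of the intertwining $\Gamma(f,\Gamma^Z(f))=\Gamma^Z(f,\Gamma(f))$ recorded just before the statement. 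Summing the two identities yields
\[
\Psi'(s) = 2 P_s\bigl( g_s \Gamma_2(\ln g_s) + g_s \Gamma_2^Z(\ln g_s) \bigr)(x,v).
\]

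Next, I would apply Proposition~\ref{lowerbound} with $h:=\ln g_s$ to obtain
\[
\Gamma_2(\ln g_s)+\Gamma_2^Z(\ln g_s)\ge -K(\eta)\Gamma(\ln g_s)+\eta\,\Gamma^Z(\ln g_s)\ge \lambda(\eta)\bigl(\Gamma(\ln g_s)+\Gamma^Z(\ln g_s)\bigr),
\]
multiply by $g_s>0$, use the algebraic identities $g\Gamma(\ln g)=\Gamma(g)/g$ and $g\Gamma^Z(\ln g)=\Gamma^Z(g)/g$, and apply $P_s$, to conclude $\Psi'(s)\ge 2\lambda(\eta)\Psi(s)$. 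Direct integration then yields the announced pointwise bound, and the boundedness and Lipschitz character of $\sqrt{P_tf}$ follow at once: the bound $P_tf\,\Gamma(\ln P_tf)=\Gamma(P_tf)/P_tf=4\Gamma(\sqrt{P_tf})$ (and analogously for $\Gamma^Z$) is finite, while $\sqrt{P_tf}$ is bounded because $P_t$ is an $L^\infty$-contraction.

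The main obstacle is the chain-rule identity for the $\Gamma^Z$ half of the first step: without the intertwining, an uncontrolled cross term would survive in $\Psi'(s)$, and there would be no way to close the argument against Proposition~\ref{lowerbound} alone. The remaining technicalities -- the possible singularity of $\ln g_s$ where $g_s$ is small, and the justification of differentiating under $P_s$ -- I would handle by a standard $f+\varepsilon$ regularization followed by $\varepsilon\to 0$, together with the Lyapunov function $W(x,v)=1+\|x\|^2+\|v\|^2$ already used in Lemma~\ref{GB}, along the lines of \cite{BB,FYW2}.
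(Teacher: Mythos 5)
Your proposal is correct and follows essentially the same route as the paper: you use the same interpolation functional $\Psi(s)=P_s\bigl(g_s\Gamma(\ln g_s)+g_s\Gamma^Z(\ln g_s)\bigr)$, identify the intertwining $\Gamma(f,\Gamma^Z(f))=\Gamma^Z(f,\Gamma(f))$ as the key to killing the cross term in $\Psi'(s)$, invoke Proposition~\ref{lowerbound}, and appeal to the Lyapunov function $W$ for rigor. The only difference is that you spell out the Bakry--\'Emery chain-rule identity and the regularization by $f+\varepsilon$ explicitly, and you correctly state $\Psi'(s)=2P_s\bigl(g_s\Gamma_2(\ln g_s)+g_s\Gamma_2^Z(\ln g_s)\bigr)$, which fixes a typographical slip in the paper's own formula for $\Psi'(s)$.
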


\begin{proof}
Again, we first give the heuristic argument. We consider the functional
  \[
  \Psi(s)=P_s ( P_{t-s} f \Gamma(\ln P_{t-s} f) +P_{t-s} f \Gamma^Z(\ln P_{t-s} f) )(x,v),
  \]
which by differentiation gives
  \[
  \Psi'(s)=2P_s (\ln P_{t-s} f \Gamma_2(\ln P_{t-s} f) +\ln P_{t-s} f \Gamma_2^Z(\ln P_{t-s} f) )(x,v)\ge 2 \lambda(\eta) \Psi(s).
  \]
Let us observe that the computation of the derivative crucially relies on the fact that  $ \Gamma(P_tf ,\Gamma^Z(\ln P_t f))=\Gamma^Z(P_t f,\Gamma(\ln P_t f))$. The rigorous justification can be given as above by using the Lyapunov function $W$ and Proposition 2.1 in \cite{BB}.
\end{proof}

 \subsection{Convergence in $H^1$}
 Throughout the section we assume that $\nabla^2 V$ is bounded.
 We are now in position to recover Villani's convergence result.
 
 \begin{theorem}\label{conve}
Assume that  the normalized  invariant measure $d\mu=\frac{1}{Z}e^{-V(x)-\frac{\| v \|^2}{2}} dxdv$ is a probability measure that satisfies the Poincar\'e inequality
\[
\int_{\mathbb{R}^{2n}}( \Gamma(f) +\Gamma^Z(f)) d\mu \ge \kappa \left[ \int_{\mathbb{R}^{2n}} f^2 d\mu -\left( \int_{\mathbb{R}^{2n}} f d\mu\right)^2 \right].
\]
With the notations of Lemma \ref{lowerbound}:
\begin{itemize}
\item If $K(\eta)+\eta >0$, then for every $f \in H^1(\mu)$, with $\int_{\mathbb{R}^{2n}} f d\mu=0$,
\begin{align*}
 & (\eta +K(\eta))\int_{\mathbb{R}^{2n}} (P_t f)^2d\mu +\int_{\mathbb{R}^{2n}} (\Gamma(P_tf) +\Gamma^Z(P_tf) )d\mu  \\
 \le &  e^{-\lambda t}\left(  (\eta +K(\eta)) \int_{\mathbb{R}^{2n}} f^2d\mu + \int_{\mathbb{R}^{2n}}(\Gamma(f) +\Gamma^Z(f)) d\mu \right) ,
\end{align*}
where $\lambda=\frac{2\eta \kappa}{\kappa+\eta +K(\eta)}$.
\item If $K(\eta)+\eta \le 0$, then for every $f \in H^1(\mu)$, with $\int_{\mathbb{R}^{2n}} f d\mu=0$,

$$
\begin{cases}
\int_{\mathbb{R}^{2n}} \Gamma(P_tf) +\Gamma^Z(P_tf) )d\mu \le e^{-2\eta t}  \int_{\mathbb{R}^{2n}} ( \Gamma(f) +\Gamma^Z(f)) d\mu \\
 \int_{\mathbb{R}^{2n}} (P_tf)^2 d\mu \le \frac{1}{\kappa} e^{-2\eta t}   \int_{\mathbb{R}^{2n}} ( \Gamma(f) +\Gamma^Z(f)) d\mu .
\end{cases}
$$
\end{itemize}
\end{theorem}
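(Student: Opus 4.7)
The plan is to integrate the local lower bound of Proposition \ref{lowerbound} against $\mu$ and turn it into a differential inequality for a carefully chosen Lyapunov functional, then close via the assumed Poincar\'e inequality. The first step is to verify the integration by parts identities
\begin{equation*}
\frac{d}{dt}\int (P_t f)^2\,d\mu = -2\int \Gamma(P_t f)\,d\mu,\quad \frac{d}{dt}\int \Gamma(P_t f)\,d\mu = -2\int \Gamma_2(P_t f)\,d\mu,\quad \frac{d}{dt}\int \Gamma^Z(P_t f)\,d\mu=-2\int \Gamma_2^Z(P_t f)\,d\mu.
\end{equation*}
Although $L$ is not symmetric for $\mu$, its antisymmetric part $Y$ is a first order operator, hence $\int fYf\,d\mu=0$, and the first identity follows from $L^\ast=\sum X_i^2+X_0-Y$ together with the invariance of $\mu$. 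The second and third identities come from the pointwise formula $2\Gamma(P_tf,LP_tf)=L\Gamma(P_tf)-2\Gamma_2(P_tf)$ (and its $\Gamma^Z$ analogue) combined with $\int Lg\,d\mu=0$. Routine tail estimates, handled via the Lyapunov function $W$ introduced in the proof of Lemma \ref{GB}, justify the integrability and the differentiation under the integral sign.

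Adding the last two identities and applying Proposition \ref{lowerbound} yields the core inequality
\begin{equation*}
\frac{d}{dt}\int\bigl(\Gamma(P_tf)+\Gamma^Z(P_tf)\bigr)d\mu \le 2K(\eta)\int\Gamma(P_tf)\,d\mu - 2\eta\int\Gamma^Z(P_tf)\,d\mu.
\end{equation*}
When $K(\eta)+\eta\le 0$, the right-hand side is bounded above by $-2\eta\int(\Gamma+\Gamma^Z)(P_tf)\,d\mu$, and Gr\"onwall's inequality gives the first bound in the second bullet. The second bound then follows directly from the hypothesis that $\mu$ satisfies the Poincar\'e inequality with gradient $\Gamma+\Gamma^Z$, applied to the zero-mean function $P_tf$.

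When $K(\eta)+\eta>0$, set $\alpha:=\eta+K(\eta)>0$ and introduce the modified energy
\begin{equation*}
\Phi(t):=\alpha\int (P_tf)^2\,d\mu+\int\bigl(\Gamma(P_tf)+\Gamma^Z(P_tf)\bigr)d\mu.
\end{equation*}
The three identities above combine with the bound from Proposition \ref{lowerbound} to give
\begin{equation*}
\Phi'(t)\le -2(\alpha-K(\eta))\int\Gamma(P_tf)\,d\mu-2\eta\int\Gamma^Z(P_tf)\,d\mu=-2\eta\int\bigl(\Gamma(P_tf)+\Gamma^Z(P_tf)\bigr)d\mu,
\end{equation*}
by the precise choice of $\alpha$. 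Using the Poincar\'e inequality to bound $\int (P_tf)^2\,d\mu\le\kappa^{-1}\int(\Gamma+\Gamma^Z)(P_tf)\,d\mu$, a short algebraic manipulation shows that $\Phi'(t)\le-\lambda\Phi(t)$ with $\lambda=\frac{2\eta\kappa}{\kappa+\eta+K(\eta)}$, which is the largest constant for which $2\eta\int(\Gamma+\Gamma^Z)\ge\lambda\alpha\int(P_tf)^2+\lambda\int(\Gamma+\Gamma^Z)$ holds through Poincar\'e. Another application of Gr\"onwall concludes the first bullet.

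The main potential obstacle is not conceptual but technical: justifying the exchange of derivative and integral, and the boundary terms in the various integrations by parts, for functions in $H^1(\mu)$ that are not a priori smooth or bounded. This is handled by first proving the statement for a dense class of compactly supported smooth test functions (using the Lyapunov function $W$ exactly as in Lemma \ref{GB}), and then extending by density using that $P_t$ is a contraction on $L^2(\mu)$ together with the pointwise gradient bound of Lemma \ref{GB}.
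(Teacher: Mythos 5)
Your proposal is correct and follows essentially the same route as the paper. The paper runs the interpolation through $\Psi(s)=(K(\eta)+\eta)P_s\bigl((P_{t-s}f)^2\bigr)+P_s\bigl(\Gamma(P_{t-s}f)+\Gamma^Z(P_{t-s}f)\bigr)$ and then integrates against $\mu$ to obtain $\Theta(s)=\int\Psi(s)\,d\mu$, which by invariance coincides with your $\Phi(t-s)$; your direct differentiation of $\Phi$, the use of Proposition \ref{lowerbound} to get $\Phi'(t)\le -2\eta\int(\Gamma+\Gamma^Z)(P_tf)\,d\mu$, the Poincar\'e step, and Gr\"onwall match the paper's argument step for step, up to this reparametrization.
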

  \begin{remark}
  The two norms  $(\eta +K(\eta)) \int_{\mathbb{R}^{2n}} f^2d\mu + \int_{\mathbb{R}^{2n}}(\Gamma(f) +\Gamma^Z(f)) d\mu$ and $\int_{\mathbb{R}^{2n}} \| \nabla f \|^2 d\mu$ are equivalent on $H^1(\mu)$, so the previous result indeed implies Villani's theorem.
  \end{remark}
  \begin{proof}
  By a density argument we can and will prove these inequalities when $f$ is smooth, bounded and Lipschitz.
  
  Let us first assume that $K(\eta)+\eta >0$. We fix $t>0$ and consider the functional
  \[
  \Psi(s) =(K(\eta)+\eta)P_s( (P_{t-s} f)^2 ) + P_s( \Gamma(P_{t-s}f) +\Gamma^Z(P_{t-s}f) ).
  \]
  By repeating the arguments of the previous section, we get the differential inequality
  \[
  \Psi(s)-\Psi(0) \ge 2 \eta \int_0^s P_u(\Gamma(P_{t-u}f) +\Gamma^Z(P_{t-u}f) )du.
  \]
 Denote now $\varepsilon=\frac{\eta +K(\eta)}{\kappa +\eta +K(\eta)}$. We have  from the assumed Poincar\'e inequality
\[
\varepsilon\int_{\mathbb{R}^{2n}} \Gamma(P_{t-u}f) +\Gamma^Z(P_{t-u}f) d\mu \ge \varepsilon \kappa  \int_{\mathbb{R}^{2n}} (P_{t-u}f)^2 d\mu .
\]
Therefore, denoting $\Theta(s)=\int_{\mathbb{R}^{2n}} \Psi(s)d\mu$, we obtain
\begin{align*}
 \Theta(s)-\Theta(0)&  \ge 2 \eta(1-\varepsilon)\int_0^s  \int_{\mathbb{R}^{2n}} \Gamma(P_{t-u}f) +\Gamma^Z(P_{t-u}f) d\mu du+2\varepsilon \kappa \int_0^s  \int_{\mathbb{R}^{2n}} (P_{t-u}f)^2 d\mu du \\
  &\ge \lambda \int_0^s \Theta (u) du.
\end{align*}
We conclude with Gronwall's  inequality.

If $K(\eta)+\eta \le 0$, then the argument is identical by considering instead  the functional
 \[
  \Psi(s) =P_s (\Gamma(P_{t-s}f) +\Gamma^Z(P_{t-s}f) ).
  \]
  \end{proof}
  
  \subsection{Integrated $\Gamma_2$ calculus}
  
  Our goal is to prove Theorem \ref{Vill} under a weaker assumption than the boundedness of $\| \nabla^2 V \| $. We assume in this section that that there is a constant $c>0$ such that $$\| \nabla^2 V \| \le c( 1+ \| \nabla V \|).$$ The keypoint of this assumption is that it implies that there exists a constant $K>0$ such that for every $g \in H^1(e^{-V}dx)$,
    \begin{equation}\label{jkhgf}
  \int_{\mathbb{R}^n} \| \nabla^2 V \|^2 g ^2 e^{-V} dx \le K \left(  \int_{\mathbb{R}^n}  g ^2 e^{-V} dx+ \int_{\mathbb{R}^n}  \| \nabla g\| ^2 e^{-V}dx \right).
  \end{equation}
  We refer to Lemma A.18 in Villani's memoir \cite{Villani1}.
  
  \
  
  We need to adapt a little the method of Section 2.2 by adjusting the parameters. Let $\alpha,\beta \in \mathbb{R}$ with $\alpha \neq 0$, to be chosen later. For $i=1,\cdots,n$ we denote 
 $$Z_i= \alpha \frac{\partial }{\partial x_i }+\beta \frac{\partial }{\partial v_i },$$
 and 
 \[
 Zf=\alpha \nabla_x f +\beta \nabla_v f.
 \]
 As before, we define then 
 \[
 \Gamma^Z(f,g)= Zf \cdot Zg =\sum_{i=1}^n Z_i fZ_ig
 \]
 and
 \[
 \Gamma^Z_2(f,g)=\frac{1}{2}\left(L\Gamma^Z(f,g)-\Gamma^Z(f,Lg)-\Gamma^Z(g,Lf)\right).
 \]
 
 \begin{proposition}\label{integrated}
There exists a choice of $\alpha,\beta$ such that there exists $\eta >0$, and  $K(\eta) \in \mathbb{R}$ such that for every $f \in C_0^\infty(\mathbb{R}^{2n})$, 
 \[
 \int_{\mathbb{R}^{2n}} \Gamma_2(f)+\Gamma_2^Z(f) d\mu \ge \int_{\mathbb{R}^{2n}}   -K(\eta)\Gamma(f)+\eta \Gamma^Z(f) d\mu.
 \]
\end{proposition}
 
 \begin{proof}
 A direct computation shows that
 \[
 \Gamma_2^Z(f)=\| \nabla_v Zf \|^2 +\beta \left( 1-\frac{\beta}{\alpha} \right) \nabla_v f \cdot Zf + \frac{\beta}{\alpha} \| Zf \|^2 -\alpha \nabla^2V (\nabla_v f, Zf)
 \]
 From the inequality \eqref{jkhgf}, we can deduce that there exists a constant $C>0$ such that
 \[
 \int_{\mathbb{R}^{2n}} \nabla^2V (\nabla_v f, Zf)d\mu \ge -C \left(  \int_{\mathbb{R}^{2n}} \| \nabla^2_v f \|^2 +\| \nabla_v Zf\|^2 +\| \nabla_v f\|^2 + \|Zf\|^2 \right) d\mu.
 \]
 By using similar arguments as in the proof of Proposition \ref{lowerbound}, the proof is easily completed. 
 \end{proof}
 
 With Proposition \ref{integrated} in hands, the argument in the proof of Theorem \ref{conve} can be reproduced, therefore providing a new proof of Villani's result Theorem \ref{Vill}.

  \subsection{Entropic convergence}
  Throughout the section we assume that $\nabla^2 V$ is bounded.
  
  We  prove here the entropic convergence of $P_t$  to the equilibrium if the invariant measure satisfies log-Sobolev inequality. The result is obtained in a very similar way as the convergence in $H^1(\mu)$.
  
   \begin{theorem}\label{entropy}
Assume that  the normalized  invariant measure $d\mu=\frac{1}{Z}e^{-V(x)-\frac{\| v \|^2}{2}} dxdv$ is a probability that satisfies the log-Sobolev inequality
\[
\int_{\mathbb{R}^{2n}}(f \Gamma(\ln f) +f\Gamma^Z(\ln f)) d\mu \ge \kappa \left[ \int_{\mathbb{R}^{2n}} f \ln f d\mu -\left( \int_{\mathbb{R}^{2n}} f d\mu\right)\ln \left( \int_{\mathbb{R}^{2n}} f d\mu\right) \right].
\]
With the notations of Lemma \ref{lowerbound}:
\begin{itemize}
\item If $K(\eta)+\eta >0$, then for every positive and bounded $f \in C^\infty(\mathbb{R}^{2n})$, such that $\sqrt{f}$ is Lipschitz and $\int_{\mathbb{R}^{2n}} f d\mu=1$,
\begin{align*}
 & 2(\eta +K(\eta))\int_{\mathbb{R}^{2n}} P_t f \ln P_t f d\mu +\int_{\mathbb{R}^{2n}} (P_tf \Gamma(\ln P_tf) +P_tf \Gamma^Z(\ln P_tf) )d\mu  \\
 \le &  e^{-\lambda t}\left( 2 (\eta +K(\eta)) \int_{\mathbb{R}^{2n}} f \ln f d\mu + \int_{\mathbb{R}^{2n}}(f\Gamma(\ln f) +f\Gamma^Z(\ln f )) d\mu \right) ,
\end{align*}
where $\lambda=\frac{2\eta \kappa}{\kappa+2(\eta +K(\eta))}$.
\item If $K(\eta)+\eta \le 0$,  then for every positive and bounded $f \in C^\infty(\mathbb{R}^{2n})$, such that $\sqrt{f}$ is Lipschitz and $\int_{\mathbb{R}^{2n}} f d\mu=1$,

$$
\begin{cases}
\int_{\mathbb{R}^{2n}} P_t f \Gamma(\ln P_tf) +P_t f \Gamma^Z(\ln P_tf) )d\mu \le e^{-2\eta t}  \int_{\mathbb{R}^{2n}} (f  \Gamma(\ln f) +f \Gamma^Z(\ln f)) d\mu \\
 \int_{\mathbb{R}^{2n}} P_tf \ln P_t f  d\mu \le \frac{1}{\kappa} e^{-2\eta t}   \int_{\mathbb{R}^{2n}} ( f\Gamma(\ln f) +f\Gamma^Z(\ln f)) d\mu .
\end{cases}
$$
\end{itemize}
\end{theorem}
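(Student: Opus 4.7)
The plan is to mirror the proof of Theorem~\ref{conve} in the entropic setting, with the Fisher-type information $\int g\,[\Gamma(\ln g)+\Gamma^Z(\ln g)]\,d\mu$ in place of $\int[\Gamma(g)+\Gamma^Z(g)]\,d\mu$, entropy $\int g \ln g\,d\mu$ in place of $\int g^2\,d\mu$, and the log-Sobolev inequality in place of the Poincar\'e inequality. By a density argument I may assume $f$ smooth, bounded, bounded away from zero, with $\sqrt{f}$ Lipschitz; then $g_s:=P_{t-s}f$ inherits these properties by Lemma~\ref{EB} and satisfies $\int g_s\,d\mu=1$ since $\mu$ is invariant.

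In the case $K(\eta)+\eta>0$, I would introduce
\[
\Theta(s) = 2(\eta+K(\eta))\int_{\mathbb{R}^{2n}} g_s \ln g_s\,d\mu + \int_{\mathbb{R}^{2n}} g_s\bigl[\Gamma(\ln g_s)+\Gamma^Z(\ln g_s)\bigr]\,d\mu.
\]
The identities $\frac{d}{ds}\int g_s \ln g_s\,d\mu = \int g_s \Gamma(\ln g_s)\,d\mu$ and $\frac{d}{ds}\int g_s[\Gamma(\ln g_s)+\Gamma^Z(\ln g_s)]\,d\mu = 2\int g_s[\Gamma_2(\ln g_s)+\Gamma_2^Z(\ln g_s)]\,d\mu$ (the latter relying on the intertwining $\Gamma(P_tf,\Gamma^Z(\ln P_tf))=\Gamma^Z(P_tf,\Gamma(\ln P_tf))$ that was used in the proof of Lemma~\ref{EB}), together with Proposition~\ref{lowerbound} applied pointwise to $\ln g_s$, yield
\[
\Theta'(s) \;\ge\; 2\eta \int_{\mathbb{R}^{2n}} g_s\bigl[\Gamma(\ln g_s)+\Gamma^Z(\ln g_s)\bigr]\,d\mu.
\]
The coefficient $2(\eta+K(\eta))$ in front of the entropy is tuned precisely so that the term $-2K(\eta)\int g_s \Gamma(\ln g_s)\,d\mu$ produced by the lower bound on $\Gamma_2+\Gamma_2^Z$ is absorbed. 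The log-Sobolev inequality applied to $g_s$ then permits a convex split of the right-hand side, exactly as in the proof of Theorem~\ref{conve}: with $\varepsilon = 2(\eta+K(\eta))/(\kappa+2(\eta+K(\eta)))$ one obtains $\Theta'(s) \ge \lambda \Theta(s)$ where $\lambda = 2\eta\kappa/(\kappa+2(\eta+K(\eta)))$, and Gronwall between $0$ and $t$ delivers $\Theta(0) \le e^{-\lambda t} \Theta(t)$, which is exactly the first assertion.

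When $K(\eta)+\eta \le 0$, the inequality $-K(\eta) \ge \eta$ collapses Proposition~\ref{lowerbound} to $\Gamma_2 + \Gamma_2^Z \ge \eta(\Gamma+\Gamma^Z)$, so the pointwise functional $\Psi(s) = P_s\bigl(g_s[\Gamma(\ln g_s)+\Gamma^Z(\ln g_s)]\bigr)(x,v)$ satisfies $\Psi'(s) \ge 2\eta \Psi(s)$ directly; pointwise Gronwall gives the Fisher-information decay of the second bullet, and a direct application of log-Sobolev to $P_tf$ upgrades this to the entropy bound. The main obstacle is the rigorous justification of the derivative identities and the interchange of derivative with integral, which becomes delicate for $\ln g_s$ because of the need to keep $g_s$ bounded away from $0$; this is handled by the same Lyapunov scheme as in Lemma~\ref{EB}, using $W(x,v)=1+\|x\|^2+\|v\|^2$ and Proposition~2.1 of \cite{BB}.
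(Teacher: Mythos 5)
Your proposal is correct and follows exactly the route the paper intends: the paper's proof consists entirely of the remark that one repeats the argument of Theorem~\ref{conve} with the functional $\Psi(s)=2\max(K(\eta)+\eta,0)P_s(P_{t-s}f\ln P_{t-s}f)+P_s(P_{t-s}f\,\Gamma(\ln P_{t-s}f)+P_{t-s}f\,\Gamma^Z(\ln P_{t-s}f))$, which after integration against the invariant measure $\mu$ is precisely your $\Theta(s)$. You have simply spelled out the de Bruijn identity for the Fisher-type information, the role of the intertwining from Lemma~\ref{EB}, the absorption of the $-K(\eta)$ term by the tuned coefficient $2(\eta+K(\eta))$, and the convex split against the log-Sobolev inequality, all of which are implicit in the paper's "identical" reference.
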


\begin{proof}
The proof is identical  to the proof of Theorem \ref{conve} by considering now the functional
  \[
  \Psi(s) =2 \max(K(\eta)+\eta, 0)P_s ( P_{t-s} f \ln P_{t-s} f) +P_s ( P_{t-s} f \Gamma(\ln P_{t-s} f) +P_{t-s} f \Gamma^Z(\ln P_{t-s} f) ).
  \]
  \end{proof}

  \subsection{Convergence in the Kantorovich-Wasserstein distance}
  
  To conclude, we prove a convergence to equilibrium in the Kantorovich-Wasserstein distance. The method is very similar to the methods developed earlier, but has the advantage not to assume anything on the invariant measure.
  
 We will denote $\mathcal{P}_2(\mathbb{R}^{2n})$ the set of probability measures on $\mathbb{R}^{2n}$ which have a finite second moment. We denote by $W_2$ the usual $L^2$ Kantorovich-Wasserstein distance associated to the Euclidean metric on $\mathbb{R}^{2n}$. It is given for $\mu , \nu \in \mathcal{P}_2(\mathbb{R}^{2n})$ by
 \[
 W_{2} (\mu,\nu)^2=\inf \mathbb{E}( \| X-Y \|^2)
 \]
where the infimum is taken over the set of random variables $X,Y$ such that $X \sim \mu$, $Y \sim \nu$.

\begin{theorem}\label{plk}
Assume that there exist constants $m,M>0$ such that $$m \le \nabla^2 V \le M$$ and $\sqrt{M} -\sqrt{m} \le 1$. Then, there exist constants $C_1,C_2>0$  such that for every $\mu,\nu \in \mathcal{P}_2(\mathbb{R}^{2n})$, and $t \ge 0$,
\[
W_{2} (P^*_t \mu , P^*_t \nu) \le C_1 e^{-C_2t} W_{2} (\mu,\nu).
\]
\end{theorem}

\begin{proof}
 Let $\alpha,\beta,\gamma,\delta \in \mathbb{R}$ be constants to be chosen later and consider the gradient
\[
\mathcal{T}(f)=\sum_{i=1}^{n} \left( \alpha \frac{\partial f}{\partial x_i} +\beta  \frac{\partial f}{\partial v_i}\right)^2 +\left( \gamma \frac{\partial f}{\partial x_i} +\delta  \frac{\partial f}{\partial v_i}\right)^2.
\]
We denote 
\[
\mathcal{T}_2 (f)=\frac{1}{2} ( L \mathcal{T}(f) - 2 \mathcal{T} (f , Lf)).
\]
In the first step of the proof, we prove that we can chose $\alpha,\beta,\gamma,\delta \in \mathbb{R}$ such that for some constant $\rho >0$, 
\begin{align}\label{con2}
\mathcal{T}_2 (f) \ge \rho \mathcal{T}(f),
\end{align}

\

We can write $\mathcal{T}$ in the form
\[
\mathcal{T}(f)=\sum_{i=1}^{2n} (Z_i f)^2
\]
with
\begin{align*}
Z_i=
\begin{cases}
\alpha \frac{\partial f}{\partial x_i} +\beta  \frac{\partial f}{\partial v_i} , 1 \le i \le n \\
 \gamma \frac{\partial f}{\partial x_{n-i}} +\delta  \frac{\partial f}{\partial v_{n-i}}, n+1 \le i \le 2n.
\end{cases}
\end{align*}

 and 
 \[
L=\sum_{i=1}^n X_i^2 +X_0+Y,
\]
where $X_i=\frac{\partial}{\partial v_i}$, $X_0=- v \cdot \nabla_v$ and $Y=\nabla V \cdot \nabla_v -v\cdot \nabla_x $.  We have then
\begin{align*}
\mathcal{T}_2 (f)&=\frac{1}{2} ( L \mathcal{T}(f) - 2 \mathcal{T} (f , Lf)) \\
 &=\frac{1}{2}\left(L\left(\sum_{i=1}^{2n} (Z_if)^2\right)-2\sum_{i=1}^{2n} Z_if Z_iLf\right)\\
 &=\sum_{i=1}^{2n}\sum_{j=1}^n (X_jZ_i f)^2 +\sum_{i=1}^{2n} Z_if [L,Z_i]f \\
 &=\sum_{i=1}^{2n} \sum_{j=1}^n (X_jZ_i f)^2+\sum_{i=1}^{2n} Z_if [X_0,Z_i]f +\sum_{i=1}^{2n} Z_if [Y,Z_i]f.
\end{align*}

As a consequence we obtain
\[
\mathcal{T}_2 (f) \ge \sum_{i=1}^{2n} Z_if [X_0,Z_i]f +\sum_{i=1}^{2n} Z_if [Y,Z_i]f.
\]

We now  compute
\begin{align*}
[X_0,Z_i]=
\begin{cases}
\beta  \frac{\partial }{\partial v_i} , 1 \le i \le n \\
\delta  \frac{\partial }{\partial v_{i-n}}, n+1 \le i \le 2n.
\end{cases}
\end{align*}
and 
\begin{align*}
[Y,Z_i]=
\begin{cases}
\beta  \frac{\partial }{\partial x_i}-\alpha\sum_{j=1}^n  \frac{\partial^2 V}{\partial x_i \partial x_j} \frac{\partial }{\partial v_i}  , 1 \le i \le n \\
\delta  \frac{\partial }{\partial x_{i-n}}-\gamma\sum_{j=1}^n  \frac{\partial^2 V}{\partial x_i \partial x_j} \frac{\partial }{\partial v_{i-n}} , n+1 \le i \le 2n.
\end{cases}
\end{align*}
As a consequence, we obtain after straightforward computations
\begin{align*}
\mathcal{T}_2 (f) \ge  &  (\beta^2+\delta^2) \| \nabla_v f \|^2+(\alpha \beta +\gamma \delta) \| \nabla_x f \|^2+ (\beta^2 +\delta^2 +\alpha \beta +\gamma \delta)  \nabla_v f  \cdot  \nabla_x f \\
 & -(\alpha^2+\gamma^2) \nabla^2 V ( \nabla_v f, \nabla_x f)-(\alpha \beta +\gamma \delta) \nabla^2 V ( \nabla_v f, \nabla_v f).
\end{align*}
The right-hand side of the above inequality can be seen as a bilinear form on $\mathbb{R}^{2n}$ applied to $\nabla f= ( \nabla_x f, \nabla_v f)$. We want this form to be definite positive.

\

We first chose  $\alpha,\beta,\gamma,\delta$ such that $\alpha^2 +\gamma^2 =1$ and denote
\[
a=\alpha \beta +\gamma \delta, \quad  b =\beta^2+\delta^2.
\]
Observe that the only constraint on $a,b$ is that $a>0$ and  $ a ^2 \le b$.
A sufficient condition for the bilinear form to be definite positive is that for any eigenvalue $\lambda$ of $\nabla^2 V$, we have
\[
(a+b-\lambda)^2 < 4a (b-a \lambda).
\]
This inequality is equivalent to
\[
-\sqrt{\kappa^2-\theta^2} +\kappa < \lambda < \sqrt{\kappa^2-\theta^2} +\kappa,
\]
where 
\[
\kappa=a+b -2a^2, \quad \theta=b-a.
\]
We thus want to  chose $\alpha, \beta,\gamma,\delta$ in such a way that
\[
-\sqrt{\kappa^2-\theta^2} +\kappa=m, \sqrt{\kappa^2-\theta^2} +\kappa=M.
\]
This condition is equivalent to
\begin{align*}
\begin{cases}
\kappa=\frac{m+M}{2} \\
\theta^2=mM
\end{cases}
\end{align*}
We finally conclude by observing that the system
\begin{align*}
\begin{cases}
a+b -2a^2=\frac{m+M}{2} \\
b-a=\sqrt{mM}
\end{cases}
\end{align*}
has a solution $0<a^2 \le b$ as soon as $\sqrt{M}-\sqrt{m} \le 1$. As a conclusion,  there exist $\alpha,\beta,\gamma,\delta \in \mathbb{R}$ such that for some constant $\rho >0$, we have for every smooth  $f$
\begin{align}\label{con3}
\mathcal{T}_2 (f) \ge \rho \mathcal{T}(f).
\end{align}
Similarly to the proof of Lemma \ref{GB}, one deduces that for every smooth and Lipschitz function $f$ one has:
\[
\mathcal{T}(P_t f) \le e^{-2\rho t} P_t \mathcal{T}(f).
\]
Kuwada's duality \cite{Kuwada} yields then the conclusion since the distance
\[
d(x,y)= \sup \{ | f(x)-f(y)| , \mathcal{T}(f) \le 1 \}
\]
is equivalent to the Euclidean distance.
\end{proof}

\end{document}